\documentclass[10pt]{article}
\usepackage[a4paper,margin=23mm]{geometry}
\usepackage{amsmath,amssymb,amsthm}
\usepackage{microtype}
\usepackage[hidelinks]{hyperref}

\newtheorem{theorem}{Theorem}

\title{The Exact Dihedral Angle Sum Range for\\Circumcentral-Cevian Jaw Tetrahedra}
\author{Sergey Korotov\\[-1mm]
\small Department of Business and Mathematics, M\"alardalen University\\[-1mm]
\small Box 883, SE-721 23 V\"aster\aa s, Sweden\\[-1mm]
\small \texttt{sergey.korotov@mdu.se}}
\date{24 October 2026}

\begin{document}
\maketitle
\vspace{-1.3em}

\begin{abstract}
Hajja, Hammoudeh, Hayajneh and Martini showed that a tetrahedron whose cevians to the circumcenters of the opposite faces are concurrent is either a kite or a special jaw. We determine the exact dihedral-angle-sum range on the jaw component. If $\Sigma(T)$ denotes the sum of the six interior dihedral angles of such a jaw $T$, then
\[
 6\arccos\frac13\le \Sigma(T)<2\pi+\arccos\frac14.
\]
The lower endpoint is attained only by the regular tetrahedron, while the upper endpoint is approached only in the degenerate limit. The proof reduces the family to one scale-invariant parameter and an explicit monotonicity argument.
\end{abstract}

\noindent\textbf{Keywords.} tetrahedron; jaw; circumcenter; cevian; dihedral angle; angle sum.\par
\noindent\textbf{MSC 2020.} 51M20, 52B10.

\section*{1. The result}
For a nondegenerate Euclidean tetrahedron $T$, let $\Sigma(T)$ be the sum of its six interior dihedral angles. The universal sharp bounds are $2\pi<\Sigma(T)<3\pi$; see Gaddum~\cite{Gaddum1952}, Jarden~\cite{Jarden1941}, and Katsuura~\cite{Katsuura2020}.

A \emph{jaw} is a tetrahedron with two opposite distinguished edges of lengths $x$ and $y$ and with the remaining four edges of one common length $a$. Hajja--Hammoudeh--Hayajneh--Martini~\cite[Theorem~3.9]{HajjaEtAl2020} proved that the cevians joining the vertices of a tetrahedron to the circumcenters of the opposite faces are concurrent if and only if the tetrahedron is either a kite or a jaw satisfying
\begin{equation}
 x^2y^2-2a^2(x^2+y^2)+3a^4=0.
 \label{eq:cevian}
\end{equation}
We call the latter tetrahedra \emph{circumcentral-cevian jaws}.

For tetrahedral kites, the exact dihedral-angle-sum range
\[
 \left[6\arccos\frac13,\,3\pi\right)
\]
is known; see Korotov~\cite{Korotov2026}. For related background on tetrahedral kites, see also Hajja--Hammoudeh--Hayajneh~\cite{HajjaHammoudehHayajneh2015}. We therefore restrict attention to the jaw component.

The kite and jaw families intersect only in the regular tetrahedra. Indeed, in this case $x=y=a$, and \eqref{eq:cevian} is immediately satisfied:
\[
 a^4-2a^2(2a^2)+3a^4=0.
\]

\begin{theorem}\label{thm:main}
For the class $\mathcal J_c$ of nondegenerate circumcentral-cevian jaws,
\[
 \{\Sigma(T):T\in\mathcal J_c\}
 =\left[6\arccos\frac13,\,2\pi+\arccos\frac14\right).
\]
The lower endpoint is attained only by the regular tetrahedron. Numerically, the interval is
\[
 [423.172676\ldots^\circ,\,435.522488\ldots^\circ).
\]
\end{theorem}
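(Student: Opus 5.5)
We normalize $a=1$ by scale invariance and place the jaw symmetrically: the $x$-edge $AB$ lies on the line $\{y=0,\ z=h\}$ centred on the $z$-axis, and the $y$-edge $CD$ lies on the line $\{x=0,\ z=-h\}$, also centred on the $z$-axis. The four lateral edges then all have length $1$ exactly when $x^2/4+y^2/4+4h^2=1$. By the symmetry group of the jaw, the six dihedral angles reduce to three values: $\alpha$ at $AB$, $\beta$ at $CD$, and a common value $\gamma$ at the four lateral edges. So
\[
\Sigma=\alpha+\beta+4\gamma .
\]
With $u=x^2$, $v=y^2$ and $16h^2=4-u-v$, the angle at $AB$ is the angle between the vectors from the midpoint of $AB$ to $C$ and to $D$. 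This gives
\[
\cos\alpha=\frac{4-u-2v}{4-u},\qquad \cos\beta=\frac{4-2u-v}{4-v},
\]
and we would compute $\cos\gamma$ in the same way from the outward normals of the two faces meeting at a lateral edge. Nondegeneracy means $u,v>0$ and $u+v<4$.

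Condition \eqref{eq:cevian} with $a=1$ reads $(u-2)(v-2)=1$. Combined with $u+v<4$, this forces both factors to be negative. Hence the family is parametrized by
\[
u=2-s,\qquad v=2-\frac1s,\qquad s\in\left(\tfrac12,2\right).
\]
The value $s=1$ gives the regular tetrahedron. The involution $s\leftrightarrow 1/s$ swaps the roles of $x$ and $y$ and preserves $\Sigma$, so it suffices to study $f(s)=\Sigma$ on $[1,2)$.

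At $s=1$ we get $f(1)=6\arccos\frac13$. As $s\to2^-$ we have $x\to0$ and $v\to\frac32$, so the tetrahedron flattens into a planar configuration. In this limit $\cos\alpha\to\frac14$, while $\beta\to\pi$ and the lateral angles $\gamma\to\frac{\pi}{4}$ (to be verified from the explicit formula for $\cos\gamma$). Therefore $f(s)\to 2\pi+\arccos\frac14$. Continuity of $f$ and the intermediate value theorem then show that the range contains the half-open interval.

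The main step, and the expected obstacle, is to show that $f$ is strictly increasing on $(1,2)$. This gives the exact range, non-attainment of the upper endpoint, and uniqueness of the minimizer up to $s\leftrightarrow1/s$. Since the two values are swapped by $s\leftrightarrow 1/s$, both correspond to the regular tetrahedron. I would differentiate $\alpha,\beta,\gamma$ in $s$ using $\frac{d}{ds}\arccos w=-w'/\sqrt{1-w^2}$. The square roots should simplify to rational expressions in $s$ and $\sqrt{(s-1)^2/s}$-type factors, because $\sin$ of each dihedral angle is proportional to $h\cdot(\text{edge})$ via the volume formula $\sin\theta=\frac{3V\ell}{2F_1F_2}$.

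Using that volume formula, $f'(s)$ becomes $\sqrt{\cdot}$ times an explicit rational function of $s$. The claim $f'(s)>0$ then reduces to positivity of a polynomial on $(1,2)$. I would first check that $f'(1)=0$ by symmetry, then factor out $(s-1)$ and verify that the remaining polynomial has no root in $[1,2]$, via Sturm sequences or by rewriting it with positive coefficients in the variable $s-1$.
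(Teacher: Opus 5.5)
Your overall architecture matches the paper's: normalize $a=1$, reduce to one parameter, prove monotonicity, evaluate the endpoints. Your factorization $(u-2)(v-2)=1$ with $u=2-s$, $v=2-1/s$ is a correct parametrization, related to the paper's parameter by $\rho^2=uv=5-2(s+1/s)$.

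The main step, however, has a genuine gap. You claim $f'(s)$ is a single square root times a rational function, so that $f'>0$ becomes a polynomial inequality from which $(s-1)$ can be factored. This is false. From your own formulas,
\[
\sin\alpha=\frac{2\sqrt{(2s-1)(s^2+1)}}{s(s+2)},\qquad
\sin\beta=\frac{2\sqrt{s(2-s)(s^2+1)}}{2s+1},\qquad
\gamma'=\frac{5(s^2-1)}{(s+2)(2s+1)\sqrt{(2-s)(2s-1)(s^2+1)}}.
\]
In the volume formula, $\sin\alpha$ and $\sin\beta$ carry the different edge lengths $y$ and $x$, and these do not cancel. After clearing the common positive factor, $f'(s)>0$ is equivalent to
\[
\frac{10(s^2-1)}{(s+2)(2s+1)}>\frac{(s^2-s-1)\sqrt{2-s}}{s(s+2)}+\frac{s^2+s-1}{2s+1}\sqrt{\frac{2s-1}{s}}.
\]
This inequality involves two independent radicals, and the coefficient $s^2-s-1$ changes sign inside $(1,2)$. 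Reducing it to polynomials would need isolating and squaring twice, with sign case analysis, and your plan provides none of this.

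The paper avoids this by combining $\alpha$ and $\beta$ before differentiating. With $d=4-u-v$ it uses $\tan^2\frac{\alpha}{2}=v/d$ and $\tan^2\frac{\beta}{2}=u/d$, so the tangent addition formula makes $\tan^2\frac{\alpha+\beta}{2}=R(\rho)$ rational in $\rho=\sqrt{uv}$, while $\cos\gamma=\rho/\sqrt{10-\rho^2}$. Then $\Sigma'(\rho)$ is a positive factor times $\frac{20+15\rho-\rho^3}{\sqrt{(\rho+1)(\rho+3)}}-10\sqrt2$. A single squaring reduces this to $-\rho^6+30\rho^4+40\rho^3-25\rho^2+200\rho+200>0$ on $[0,1]$. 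Since $\rho$ decreases in $s$ on $(1,2)$, the chain rule then gives your $f'(s)>0$.

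Your endpoint analysis also contains errors.
\begin{itemize}
\item As $s\to2^-$ one has $16h^2=4-u-v\to\frac52$, so the tetrahedron does not flatten; the edge $AB$ collapses to a point instead.
\item Your own formula gives $\cos\beta=\frac{4-2u-v}{4-v}\to1$, so $\beta\to0$, not $\pi$.
\item Also $\cos\gamma\to0$, so $\gamma\to\frac{\pi}{2}$, not $\frac{\pi}{4}$.
\end{itemize}
The correct total $\arccos\frac14+0+4\cdot\frac{\pi}{2}$ happens to equal your $\arccos\frac14+\pi+4\cdot\frac{\pi}{4}$. So the limit $2\pi+\arccos\frac14$ is right, but only by accident.

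A minor point: you should also justify that every jaw admits your symmetric placement, i.e.\ that the distinguished edges are orthogonal with common perpendicular through their midpoints. The paper does this via the perpendicular-bisector planes.
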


\begin{proof}
By scaling, take $a=1$ and put
\[
 u=x^2>0,\qquad v=y^2>0.
\]
Then \eqref{eq:cevian} becomes
\begin{equation}
 uv-2(u+v)+3=0.
 \label{eq:uv}
\end{equation}
Because the four cross-edges all have length $1$, the endpoints of each distinguished edge are equidistant from the endpoints of the other. Thus both endpoints of one distinguished edge lie in the perpendicular-bisector plane of the other edge. The first edge therefore lies in that plane, while the second is normal to it; hence the two distinguished edges are orthogonal. After a rigid motion, every such jaw can therefore be written as
\[
 A_1=\left(-\frac{\sqrt u}{2},0,0\right),\quad
 A_2=\left( \frac{\sqrt u}{2},0,0\right),
\]
\[
 A_3=\left(0,c,\frac{\sqrt v}{2}\right),\quad
 A_4=\left(0,c,-\frac{\sqrt v}{2}\right),\qquad
 c=\frac12\sqrt{4-u-v}>0.
\]
Indeed, each cross-edge then has squared length $(u+v)/4+c^2=1$, and $c>0$ expresses nondegeneracy.

Set
\[
 \rho:=\sqrt{uv}>0.
\]
From \eqref{eq:uv},
\begin{equation}
 u+v=\frac{\rho^2+3}{2}.
 \label{eq:sumuv}
\end{equation}
Moreover,
\[
 d:=4-u-v=\frac{5-\rho^2}{2}>0.
\]
The arithmetic--geometric mean inequality gives
\[
 2\rho\le u+v=\frac{\rho^2+3}{2},
\]
so $(\rho-1)(\rho-3)\ge0$. Since $d>0$ implies $\rho<\sqrt5<3$, it follows that
\[
 0<\rho\le1.
\]
If $\rho=1$, equality holds in the arithmetic--geometric mean inequality, hence $u=v=1$, i.e. the regular tetrahedron. Conversely, every $\rho\in(0,1]$ occurs: $u,v$ are the positive roots of
\[
 z^2-\frac{\rho^2+3}{2}z+\rho^2=0,
\]
whose discriminant is
\[
 \frac14(1-\rho^2)(9-\rho^2)\ge0.
\]
For these roots $u+v\le2$, so $d=4-u-v\ge2>0$, and the coordinate realization above gives a nondegenerate jaw. Thus every $\rho\in(0,1]$ produces a nondegenerate circumcentral-cevian jaw, and conversely every such jaw gives a parameter $\rho\in(0,1]$. Hence $(0,1]$ parametrizes the entire family of nondegenerate circumcentral-cevian jaws, up to similarity and interchange of the two distinguished edges.

Let $\alpha$ and $\beta$ be the dihedral angles at the distinguished edges of lengths $\sqrt u$ and $\sqrt v$, respectively, and let $\gamma$ be the common dihedral angle at each of the other four edges. In the plane perpendicular to $A_1A_2$, the two face directions are represented by $(c,\sqrt v/2)$ and $(c,-\sqrt v/2)$. Hence
\[
 \cos\alpha
 =\frac{c^2-v/4}{c^2+v/4}
 =\frac{4-u-2v}{4-u}.
\]
Similarly,
\[
 \cos\beta
 =\frac{c^2-u/4}{c^2+u/4}
 =\frac{4-v-2u}{4-v}.
\]
For a cross-edge, say $A_1A_3$, take a normal to the face $A_1A_2A_3$ directed away from $A_4$ and a normal to the face $A_1A_3A_4$ directed toward $A_2$:
\[
 n_1=\left(0,-\frac{\sqrt v}{2},c\right),\qquad
 n_2=\left(c,-\frac{\sqrt u}{2},0\right).
\]
The angle between these normals is the interior dihedral angle $\gamma$, and their scalar product and norms give
\[
 \cos\gamma
 =\frac{\sqrt{uv}/4}
 {\sqrt{(c^2+v/4)(c^2+u/4)}}
 =\sqrt{\frac{uv}{(4-u)(4-v)}}.
\]
Thus
\begin{equation}
 \cos\alpha=\frac{4-u-2v}{4-u},\qquad
 \cos\beta =\frac{4-v-2u}{4-v},\qquad
 \cos\gamma=\sqrt{\frac{uv}{(4-u)(4-v)}}.
 \label{eq:angles}
\end{equation}
From \eqref{eq:angles},
\[
 \tan^2\frac\alpha2=\frac vd,\qquad
 \tan^2\frac\beta2=\frac ud,\qquad
 \cos\gamma=\frac{\rho}{\sqrt{10-\rho^2}}.
\]
If $A=\tan(\alpha/2)$ and $B=\tan(\beta/2)$, then
\[
 AB=\frac{2\rho}{5-\rho^2},\qquad
 A^2+B^2=\frac{\rho^2+3}{5-\rho^2}.
\]
Hence
\begin{equation}
 \tan^2\frac{\alpha+\beta}{2}
 =R(\rho):=
 \frac{(\rho+1)(\rho+3)(5-\rho^2)}{(5-\rho^2-2\rho)^2}.
 \label{eq:R}
\end{equation}
Moreover, $AB=2\rho/(5-\rho^2)\le 1/2<1$, so
$\tan((\alpha+\beta)/2)=(A+B)/(1-AB)>0$; hence
$0<(\alpha+\beta)/2<\pi/2$ and therefore
$(\alpha+\beta)/2=\arctan\sqrt{R(\rho)}$.
Since $u$ and $v$ are determined by $\rho$ up to interchange, and the dihedral-angle sum is symmetric in $u$ and $v$, it depends only on $\rho$. We therefore write it as $\Sigma(\rho)$. Thus
\begin{equation}
 \Sigma(\rho)=
 2\arctan\sqrt{R(\rho)}
 +4\arccos\frac{\rho}{\sqrt{10-\rho^2}},
 \qquad 0<\rho\le1.
 \label{eq:Sigma}
\end{equation}
Differentiating and simplifying gives
\begin{equation}
 \Sigma'(\rho)=
 \frac{2}{(10-\rho^2)\sqrt{5-\rho^2}}
 \left[
 \frac{20+15\rho-\rho^3}{\sqrt{(\rho+1)(\rho+3)}}-10\sqrt2
 \right].
 \label{eq:derivative}
\end{equation}
For $0\le\rho\le1$, both quantities in the brackets in \eqref{eq:derivative} are positive. Hence, to prove that the bracket is negative, it suffices to compare their squares. We have
\begin{align*}
 &200(\rho+1)(\rho+3)-(20+15\rho-\rho^3)^2\\
 &\qquad=-\rho^6+30\rho^4+40\rho^3-25\rho^2+200\rho+200.
\end{align*}
Since $0\le\rho\le1$, the terms $30\rho^4$, $40\rho^3$, and $200\rho$ are nonnegative, while
$-\rho^6\ge-1$ and $-25\rho^2\ge-25$. Therefore
\[
 -\rho^6+30\rho^4+40\rho^3-25\rho^2+200\rho+200
 \ge 200-25-1=174>0.
\]
Therefore
\[
 \frac{20+15\rho-\rho^3}{\sqrt{(\rho+1)(\rho+3)}}<10\sqrt2,
\]
and consequently $\Sigma'(\rho)<0$ on $(0,1]$.

At $\rho=1$ we have the regular tetrahedron, so
\[
 \Sigma(1)=6\arccos\frac13.
\]
As $\rho\downarrow0$, equation \eqref{eq:Sigma} gives $\gamma\to\pi/2$ and $R(\rho)\to3/5$. Since
\[
 \cos\left(2\arctan\sqrt{\frac35}\right)=\frac14,
\]
we obtain
\[
 \lim_{\rho\downarrow0}\Sigma(\rho)
 =2\pi+\arccos\frac14.
\]
The limit is not attained because $\rho=0$ is degenerate. Since every $\rho\in(0,1]$ corresponds to a nondegenerate circumcentral-cevian jaw and $\Sigma(\rho)$ is continuous and strictly decreasing, as $\rho$ runs from $1$ down to $0$, the value $\Sigma(\rho)$ runs through every value from
\[
 6\arccos\frac13
\]
up to, but not including,
\[
 2\pi+\arccos\frac14.
\]
Thus the interval in Theorem~\ref{thm:main} is the exact range, and the lower endpoint is attained only by the regular tetrahedron.
\end{proof}

\paragraph{Remark.}
The interval in Theorem~\ref{thm:main} has width only
\[
 \arccos\frac14-\left(6\arccos\frac13-2\pi\right)
 =12.349812\ldots^\circ,
\]
so the jaw component of the circumcentral-cevian family is highly rigid with respect to the dihedral-angle sum.

\end{document}